\newcommand{\lk}{\ell k}
\newcommand{\Z}{\mathbb{Z}}
\newcommand{\ML}{\mathcal{L}}
\newcommand{\LE}{\lambda_{L}}
\newcommand{\I}{\chi(I_*\,}
\newcommand{\SL}{$s(\ell_1 \cup \ell_2 \subset \Sigma)$ }
\newcommand{\ad}{\operatorname{ad}}
\newcommand{\tr}{\operatorname{tr}}
\newcommand{\Stab}{\operatorname{Stab}}
\newcommand{\Tor}{\operatorname{Tor}}
\newcommand{\In}{I^{\natural}_{*}}
\newcommand{\pt}{\operatorname{point}}
\newcommand{\Br}{\operatorname{Br}}
\newtheorem{theorem}{Theorem}[section]
\newtheorem*{theorem1}{Theorem}
\newtheorem{prop}[theorem]{Proposition}
\newtheorem{lemma}[theorem]{Lemma}
\theoremstyle{definition}
\newtheorem*{remark}{Remark}
\newtheorem*{example}{Example}
\title{Lescop's Invariant and Gauge Theory}
\thanks{The author was partially supported by NSF Grant 1065905}
\author{Prayat Poudel}
\address{Department of Mathematics \newline \indent University of Miami \newline \indent P.O.Box 249085 \newline \indent Coral Gables, FL 33124}
\email{\rm{poudelp@math.miami.edu}}
\keywords{Casson invariant, Lescop invariant, Floer homology}
\subjclass[2010]{57M27, 57R58}
\begin{document}

\begin{abstract}
Taubes proved that the Casson invariant of an integral homology $3$-sphere equals half the Euler characteristic of its instanton Floer homology. We extend this result to all closed oriented $3$-manifolds with positive first Betti number by establishing a similar relationship between the Lescop invariant of the manifold and its instanton Floer homology. The proof uses surgery techniques.
\end{abstract}

\maketitle
\section{Introduction}
The Lescop invariant $\LE(M)$ is a rational valued invariant of closed 
oriented $3$-manifolds $M$ defined by Lescop \cite{LE} via a combinatorial 
formula as a generalization of the Casson invariant \cite{AM}. The Casson
invariant, while only defined for integral homology $3$-spheres, has a very 
useful gauge theoretic interpretation, due to Taubes \cite{CT}, as half the 
Euler characteristic of the instanton Floer homology \cite{F1}. We provide 
a similar interpretation of the Lescop invariant for all $3$-manifolds with the 
positive first Betti number using a version of instanton Floer homology 
based on admissible bundles. In fact, our formula matches the one 
conjectured in the physics literature, where the Lescop invariant arises 
as a partition function of the Donaldson-Witten theory of a 4-manifold of 
the form $S^1 \times M$, see Mari\~no--Moore \cite{MM}.

\begin{theorem1} {\label{Thm:First}}
Let $M$ be a closed oriented connected $3$-manifold with $b_1(M) \ge 1$, and 
let $\LE(M)$ be its Lescop invariant. Then there exists an admissible bundle 
$P$ over $M$ such that
\begin{align*}
\LE(M) & = -\displaystyle\frac{1}{2}\, \I(M,P)) - \displaystyle\frac{1}{12}\,|\Tor(H_1(M))|, \quad \textnormal{if } b_1(M) =1,\; \textnormal{ and} \\
\LE(M) & = \displaystyle\frac{1}{2}\, (-1)^{b_1(M)} \cdot |\Tor(H_1(M))| \cdot \I(M,P)), \quad \textnormal{if } b_1(M) \geq 2,
\end{align*}
where $\I(M,P))$ stands for the Euler characteristic of the instanton Floer homology of the pair $(M,P)$, see Section \ref{inst}.
\end{theorem1}

In addition, if $H_1(M)$ has no $2$-torsion, we show that $\I(M,P))$ is independent of the choice of admissible bundle $P$ hence the above formulas hold for \emph{any} admissible bundle $P$. We conjecture that the same is true for any $M$ with $b_1(M) \geq 1$, with or without $2$-torsion in its homology.

Still lacking is a gauge theoretic interpretation of the Lescop invariant 
for rational homology 3-spheres with non-trivial torsion because there is 
no satisfactory definition of instanton Floer homology for such manifolds.

Our proof will proceed by induction on the first Betti number $b_1 (M)$ of the 
manifold and will use the Floer exact triangle \cite{F2}. In case $M$ has no 
torsion in homology, we will start the induction at $b_1 (M) = 0$ and use 
Taubes' theorem \cite{CT}. In the presence of torsion, due to the 
aforementioned problem with defining instanton Floer homology for rational 
homology spheres, we will start at $b_1 (M) = 1$ and use an extension of 
Taubes' theorem due to Masataka \cite{MT}. 

The paper also contains applications of our main theorem to the singular instanton knot homology of Kronheimer and Mrowka \cite{KMK} and to the instanton homology of two-component links of Harper and Saveliev \cite{HS}, and an example explaining the factor $|\Tor(H_1(M))|$ in the Lescop invariant from a gauge-theoretic viewpoint.

\subsection*{Acknowledgments}
I am immensely grateful to my Ph.D. advisor Dr. Nikolai Saveliev for his guidance and support. I am also thankful to Dr. Ken Baker and Dr. Kim Fr\o yshov for their helpful suggestions. 


\section{Instanton Floer Homology}{\label{inst}}
Let $M$ be a closed oriented connected $3$-manifold and $P \rightarrow M$ a 
$U(2)$-bundle such that one of the following conditions holds:
\begin{enumerate}
\item[(1)] $M$ is an integral homology sphere and $P$ is a trivial ${SU}(2)$-bundle, or
\item[(2)] $b_1(M) \geq 1$ and $P$ is a $U(2)$-bundle whose first Chern class $c_1(P)$ has an odd pairing with some integral homology class in $H_2(M)$. 
Note that the second Stiefel-Whitney class $w_2(\ad(P)) \in H^{2}(M;\Z_2)$ of the associated $SO(3)$-bundle $\ad(P)$ is then not zero as a map $H_2(M) \rightarrow \Z_2$.
\end{enumerate}

Both the bundle $P$ and its adjoint bundle $\ad(P)$ will be referred to as admissible bundles.

Given an admissible bundle $P$, consider the space $\mathcal{C}$ of $SO(3)$-connections in $\ad(P)$. This space is acted upon by the group $\mathcal{G}$ of determinant one gauge transformations of $P$. The instanton Floer homology $I_*(M,P)$ is the Floer homology arising from the Chern-Simons function of the space $\mathcal{C} / \mathcal{G}$, see Donaldson \cite{DO}.

The groups $I_*(M,P)$ depend on the choice of $\ad(P)$ but not $P$. They have an absolute $\Z_2$-grading defined as in Section 5.6 of \cite{DO}. These groups also admit a relative $\Z_8$-grading, which becomes an absolute $\Z_8$-grading if $M$ is an integral homology sphere. Note that our setup is consistent with that of Kronheimer and Mrowka \cite{KMS}, Section $7.1$. Using their notations, 
$I_{*}(M,P) = I_{*}(M)_w$, where $w = \det\,(P)$ is the determinant bundle of $P$.

Two main ingredients that go into our calculation of the Euler characteristic of $I_{*}(M,P)$ are as follows. The first one is a special surgery presentation of $M$ as in \cite{LE}, Lemma 5.1.1. 

\begin{lemma}{\label{Lem:Les}}
Let M be a closed connected oriented 3-manifold and $b_1(M) = n \geq 0$. 
Then  there exists a rational homology sphere $\Sigma$ and an algebraically  
split n-component link $\ML  \subset \Sigma$ such that $M = \Sigma  + 0 \cdot \ML$, each component of $\ML$ is null-homologous in $\Sigma$, and $|H_1(\Sigma)| = |\Tor(H_1(M))|$.
\end{lemma}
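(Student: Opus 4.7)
My plan is to start from any surgery presentation of $M$ on a framed link in $S^3$ and use handle slides to bring the linking matrix into a block form that reads out the desired decomposition directly. First I would present $M$ as surgery on a framed link $L = L_1 \cup \cdots \cup L_k$ in $S^3$ and let $A$ denote its symmetric integer linking matrix. From this presentation one has $H_1(M) \cong \CoKer(A \colon \Z^k \to \Z^k)$, so the nullity of $A$ equals $b_1(M) = n$.

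The key linear-algebra step is to find $P \in GL(k,\Z)$ with
\[
P^T A P = \begin{pmatrix} 0 & 0 \\ 0 & A' \end{pmatrix},
\]
where $A'$ is a nonsingular $(k-n) \times (k-n)$ matrix. Such a $P$ exists because $\Ker A$ is a direct summand of $\Z^k$ (the image of $A$ is free abelian, so the short exact sequence splits), and any basis of $\Ker A$ therefore extends to a basis of $\Z^k$. Since $GL(k,\Z)$ is generated by elementary transvections together with sign changes, each of which corresponds to a handle slide or to the reversal of orientation of a link component, the congruence by $P$ can be realized by a finite sequence of Kirby moves on $L$. After these moves, the new framed link splits as $\mathcal{L} \cup L'$, where $\mathcal{L}$ has $n$ components, all with framing $0$, pairwise linking numbers $0$, and linking numbers with every component of $L'$ equal to $0$.

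Next I would set $\Sigma$ to be the manifold obtained by surgering $S^3$ along $L'$ alone. Since $A'$ is nonsingular, $\Sigma$ is a rational homology sphere with $|H_1(\Sigma)| = |\det A'|$, and by construction $M = \Sigma + 0\cdot \mathcal{L}$. Each component of $\mathcal{L}$ represents in $H_1(\Sigma)$ a sum of meridians of $L'$ weighted by linking numbers that all vanish, so it is null-homologous in $\Sigma$; the $0$-framing inherited from $S^3$ still agrees with the Seifert framing in $\Sigma$ precisely because these linking numbers are zero. Finally, reading the cokernel of $A$ off the block form yields $H_1(M) \cong \Z^n \oplus \Z^{k-n}/A'\Z^{k-n}$, hence $|\Tor(H_1(M))| = |\det A'| = |H_1(\Sigma)|$.

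The only potentially delicate point is the translation of the algebraic change of basis into a concrete sequence of Kirby moves, but this will be routine once one recalls that elementary transvections realize handle slides and sign changes realize orientation reversals, and that these suffice to generate $GL(k,\Z)$.
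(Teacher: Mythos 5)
Your argument is correct; note that the paper does not prove this lemma itself but quotes it as Lemma 5.1.1 of Lescop's book, and your proof is essentially the standard argument behind that reference: split off the kernel of the linking matrix of a surgery presentation by an integral congruence realized through handle slides and orientation reversals, and take $\Sigma$ to be the surgery on the remaining nonsingular block. You also correctly address the one delicate point, namely that the vanishing of the linking numbers of the components of $\mathcal{L}$ with $L'$ (and with each other) ensures that each component of $\mathcal{L}$ is null-homologous in $\Sigma$, that the link stays algebraically split in $\Sigma$, and that the $0$-framing inherited from $S^3$ coincides with the Seifert framing in $\Sigma$.
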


We denote by $\Sigma + 0 \cdot \ML$ the manifold obtained by 0-framed surgery on $\ML$. Similarly, $\Sigma \pm \ell$ will denote the result of $(\pm 1)$-surgery on knot $\ell$.

The second ingredient is a long exact sequence known as the Floer exact triangle, see \cite{BD}. Let $\Sigma$ be a rational homology sphere and $\ML = \ell_1 \cup \ell_2 \ldots \cup \ell_n$ be an algebraically split link in $\Sigma$. If $n=1$, we will require that $\Sigma$ be an integral homology sphere. Then we have the following Floer exact triangle\,:

\medskip

\begin{picture}(220,68)
	\put (88,50)	{$I_*(\Sigma + 0 \cdot \ell_{1}+ \ldots + 0\cdot \ell_n)$} 
	\put (127,40)	{\vector (-1,-1){20}}
	\put (95,32)	{$\empty_{-1}$}
	\put (207,20)	{\vector (-1,1){20}}
	\put (-10,2)	{$I_*\, (\Sigma + 0 \cdot \ell_{1}+ \ldots + 0\cdot \ell_{n-1} )$} 
	\put (143,5)	{\vector (1,0) {30}}
	\put (180,2)	{$I_*\,((\Sigma+ 0 \cdot \ell_1 + \ldots 0 \cdot \ell_{n-1})  - \ell_{n})$}
\end {picture}
\vspace*{4mm}

\noindent
The admissible bundles $P$, which are omitted from the notations, have $c_1(P)$ pair non-trivially with the natural homology classes obtained by capping off a Seifert surface of $\ell_j$ by a meridional disk of the surgery. In addition, the three admissible bundles that show up in the Floer exact triangle match when restricted to the exterior of the link $\ell_1 \cup \cdots \cup \ell_n$ in $\Sigma$. If $n=1$, the bundles $P$ over $\Sigma$ and $ \Sigma - \ell_1$ are trivial $SU(2)$-bundles. 

It should be pointed out that we will not use the complete strength of the Floer exact triangle; all we will derive from it is the following relation on Euler characteristics:
\begin{align*}
&\I (\Sigma + 0 \cdot \ell_1 + 0 \cdot \ell_2 + \ldots + 0 \cdot \ell_n)) = \\
& \I (\Sigma+ 0 \cdot \ell_1 + \ldots + 0 \cdot \ell_{n-1})-\ell_n)
 - \I (\Sigma + 0 \cdot \ell_1 + \ldots+ 0 \cdot \ell_{n-1})). 
\end{align*}
The obvious observation that 
\begin{align*}
& b_1((\Sigma+ 0 \cdot \ell_1 + \ldots + 0 \cdot \ell_{n-1})-\ell_n)) =b_1(\Sigma + 0 \cdot \ell_1 + \ldots+ 0 \cdot \ell_{n-1})\\
& = b_1(\Sigma + 0 \cdot \ell_1 + 0 \cdot \ell_2 + \ldots 0 \cdot \ell_n) - 1
\end{align*}
allows us to proceed via induction on $b_1(M)$. 
\medskip

\section{Case of $b_1(M) =0$}
If $M$ is an integral homology sphere, then $\LE(M) = \lambda(M)$, which is the Casson invariant of $M$, see Section 1.5 of \cite{LE}. On the Floer homology side, we work with the trivial $SU(2)$-bundle $P$ over $M$, and denote the instanton Floer homology by $I_{*}(M,P)$. According to Taubes \cite{CT}, we have $\I(M,P)) = 2 \cdot \lambda(M)$. Therefore, $\I(M,P)) = 2 \cdot \LE(M)$.


\section{Case of $b_1(M) = 1$}
Let $k \subset \Sigma$ be a null-homologous knot in a rational homology sphere $\Sigma$. Choose a Seifert surface $F$ of $k$, and denote by $V$ its Seifert matrix with respect to a basis of $H_1(F)$. The Laurent polynomial 
\[
\Delta_{k \subset \Sigma}(t) = |H_1(\Sigma)|\,\cdot\, \det\,(t^{1/2}\,V - t^{-1/2}\,V^{\top})
\]
is called the Alexander polynomial of $k \subset \Sigma$. Note that $\Delta_{k\subset \Sigma}(t) = \Delta_{k\subset \Sigma}(-t)$ and $\Delta_{k\subset \Sigma}(1) = |H_1(\Sigma)| > 0$.

Given a closed oriented connected $3$-manifold $M$ with $b_1(M) = 1$, according to Lemma \ref{Lem:Les}, there exists a null-homologous knot $\ell_1 \subset \Sigma$ in a rational homology sphere $\Sigma$ such that $M = \Sigma + 0 \cdot \ell_1$. The Lescop invariant of $M$ is then equal to 
\begin{equation}{\label{eq:LEb1}}
\LE(M) = \frac{1}{2}{\Delta^{''}_{\ell_1 \subset \Sigma}}(1) - \frac{1}{12} \; |\Tor(H_1(M))|,
\end{equation}
see \cite{LE}, Section 1.5. It is independent of the choice of surgery presentation of $M$. 

In the special case when $\Sigma$ is an integral homology sphere, it follows from the Floer exact triangle and Casson's surgery formula \cite{AM} that 
\begin{align*}
	\I(M,P)) &= \I(\Sigma - \ell_{1})) - \I(\Sigma))= 2 \lambda(\Sigma - \ell_{1}) - 2\lambda(\Sigma) \\
	&= 2\left(-\frac{1}{2} {\Delta^{''}_{\ell_1 \subset \Sigma}}(1)\right)= -{\Delta^{''}_{\ell_1 						\subset \Sigma}}(1).
\end{align*}Therefore, 
\begin{equation}
\I(M,P)) = - 2 \LE(M) - \displaystyle\frac{1}{6} \; |{\Tor} (H_1(M))|
\end{equation}
as claimed, for the unique admissible $SU(2)$-bundle $P$ over $M$. The general case is handled similarly using the following result.

\begin{prop}
Let $M$ = $\Sigma + 0 \cdot \ell_1$, where $\ell_1$ is a null-homologous knot in a rational homology sphere $\Sigma$. Then $\I(M,P)) = -\Delta_{\ell_1 \subset \Sigma}^{''}(1)$ for any admissible bundle $\ad(P)$ over $M$ whose restriction to the exterior of $\ell_1 \subset \Sigma$ is trivial. 
\end{prop}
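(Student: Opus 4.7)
Plan: The proof should mirror the integer homology sphere argument given in the preceding paragraph but with two substitutions. In that argument, the Floer exact triangle reduced $\chi(I_*(M,P))$ to a difference of Euler characteristics over the integer homology spheres $\Sigma$ and $\Sigma - \ell_1$; Taubes' theorem converted these into Casson invariants; and Casson's surgery formula identified the resulting difference with $-\tfrac{1}{2}{\Delta^{''}_{\ell_1 \subset \Sigma}}(1)$. In the present setting, $\Sigma$ is only a rational homology sphere, so Taubes' theorem and Casson's surgery formula must be replaced by their rational homology sphere analogues, and the $n=1$ Floer exact triangle cannot be invoked directly because the trivial-bundle Floer homologies on $\Sigma$ and $\Sigma - \ell_1$ are not defined when $|H_1(\Sigma)| > 1$.

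The first step of the plan is to invoke Masataka's extension of Taubes' theorem \cite{MT}, flagged in the introduction, which is tailored precisely to compute $\chi(I_*(\cdot,\cdot))$ for manifolds of the form $\Sigma + 0 \cdot \ell_1$ in the presence of torsion. The hypothesis on $P$ --- that $\ad(P)$ restricts trivially to the exterior of $\ell_1 \subset \Sigma$ --- is exactly what puts the triple $(M,\ell_1,P)$ into Masataka's setup; morally, the admissible bundle becomes an object concentrated on the surgery solid torus, so that the missing Floer homologies of $\Sigma$ and $\Sigma - \ell_1$ with trivial bundles can be replaced by Casson--Walker invariants $\lambda_{W}$ of these rational homology spheres.

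The second step is to apply Walker's extension of Casson's surgery formula: for a null-homologous knot in a rational homology sphere, the difference $\lambda_{W}(\Sigma - \ell_1) - \lambda_{W}(\Sigma)$ is proportional to ${\Delta^{''}_{\ell_1 \subset \Sigma}}(1)$, with a normalization constant that carries a factor of $|H_1(\Sigma)|$. Because the Alexander polynomial $\Delta_{\ell_1 \subset \Sigma}(t)$ is defined in this paper with a matching factor of $|H_1(\Sigma)|$ built in, the two normalizations cancel when the two substitutions are combined, leaving the clean identity $\chi(I_*(M,P)) = -{\Delta^{''}_{\ell_1 \subset \Sigma}}(1)$.

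The main obstacle will be the bookkeeping: applying Masataka's formula in the correct normalization for admissible bundles satisfying the triviality-on-the-exterior condition, and then reconciling the output with Walker's normalization of $\lambda_{W}$. Sign conventions and the factor $|H_1(\Sigma)|$ must be tracked carefully --- the absence of $|H_1(\Sigma)|$ on the right-hand side of the proposition reflects a precise cancellation between the two formulas, and checking this cancellation is the substantive content of the argument.
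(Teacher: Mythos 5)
Your plan is essentially the paper's proof: it likewise starts from Masataka's formula $\tfrac12\,\chi(I_*(M,P)) = -\lambda_L(\Sigma+\ell_1)+\lambda_L(\Sigma)$ and then identifies the right-hand side with $-\tfrac12\,\Delta''_{\ell_1\subset\Sigma}(1)$ via a surgery formula for the Casson--Walker/Lescop invariant. The paper carries out this second step using Lescop's surgery function $\mathbb{F}_{\Sigma}(\ell_1)$ (noting $|H_1(\Sigma+\ell_1)|=|H_1(\Sigma)|$, so $\lambda_L(\Sigma+\ell_1)-\lambda_L(\Sigma)=\mathbb{F}_{\Sigma}(\ell_1)=\tfrac12\,\Delta''_{\ell_1\subset\Sigma}(1)$) rather than Walker's normalization of $\lambda_W$, but this is exactly the $|H_1(\Sigma)|$ cancellation you describe.
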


\begin{proof}
If $H_1(\Sigma)$ has non-trivial torsion, the starting point for our calculation will be the result from \cite{MT} which, with our normalization, reads
\begin{equation*}
\frac{1}{2} \I(M,P)) =  - \LE(\Sigma+ \ell_1) + \LE(\Sigma).
\end{equation*}
We wish to identify the right hand side of this equation with $-\frac{1}{2} \Delta^{''}_{\ell_1 \subset \Sigma}(1)$.
By Lescop \cite{LE}, 
\begin {equation*}
	\LE(\Sigma + \ell_{1}) = \displaystyle \frac {\vline \, H_1(\Sigma+\ell_{1}) \, \vline}	{\vline \,	H_1(\Sigma)\, \vline} \cdot \LE(\Sigma) + \mathbb{F}_{\Sigma}	(\ell_{1}),\\*
\end{equation*}
where $\mathbb{F}_{\Sigma}(\ell_{1})$ is defined by equation 1.4.8 in \cite{LE}. Since $|H_1(\Sigma + \ell_1)| = |H_1(\Sigma)|$, we conclude that $\LE(\Sigma + \ell_{1}) = \LE(\Sigma) + \mathbb{F}_{\Sigma}(\ell_{1})$ and therefore 
\begin{equation*}
\frac{1}{2} \I(M,P))= -\mathbb{F}_{\Sigma}(\ell_{1}).
\end{equation*}
A straightforward calculation of $\mathbb{F}_{\Sigma}(\ell_{1})$ shows that
\begin{equation*}
\mathbb{F}_{\Sigma}(\ell_1) = \frac {1}{2} \Delta_{\ell_1 \subset \Sigma}^{''}(1),
\end{equation*}
which leads to the desired formula. Therefore, for our choice of admissible bundle $P$ over $M$, 
\begin{equation*}
\I(M,P)) = - 2 \LE(M) - \frac{1}{6} \; |{\Tor} (H_1(M))|.
\end{equation*}
\end{proof}

\begin{remark}
The exact sequence of the universal coefficient theorem, 
\begin{equation*}
0 \longrightarrow \operatorname{Ext}(H_1(M),\Z_2) \longrightarrow H^2(M;\Z_2) \longrightarrow \operatorname{Hom}(H_2(M), \Z_2) \longrightarrow 0 ,
\end{equation*}
tells us that the ambiguity in choosing an admissible bundle $\ad(P)$ over $M = \Sigma + 0 \cdot \ell_1$ resides in the group $\operatorname{Ext}(H_1(M),\Z_2)$. If $H_1(M)$ has no $2$-torsion, the latter group vanishes, implying that there is a unique admissible bundle $\ad(P)$ over $M$. 
\end{remark}


\section{Case of $b_1(M) = 2 $}
Let $\ML$ = $\ell_{1} \cup \ell_{2}$ be an oriented two-component link in a rational homology sphere $\Sigma$ such that $\ell_{1}$, $\ell_{2}$ are null-homologous in  $\Sigma$ and $\lk_{\Sigma}(\ell_{1}, \ell_{2})$ = 0. There exist Seifert surfaces $F_1$ and $F_2$ of $\ell_1$ and $\ell_2$, respectively, such that $F_1 \cap \ell_2$ = $\emptyset$ and $F_2 \cap \ell_1$ = $\emptyset$. If the surfaces intersect, they may be assumed to intersect in a circle $c$, see \cite{TC}. The self linking number of $c$ with respect to either $ F_{1} $ or $ F_{2} $ is called the Sato-Levine invariant and is denoted by \SL . To be precise, \SL = $\lk_{\Sigma}(c,c^{+})$, where $c^+$ is a positive push-off of $c$ with respect to either $F_1$ or $F_2$. If the surfaces don't intersect then \SL =$0$.

Given a closed oriented connected $3$-manifold $M$ with $b_1(M) = 2$, according to Lemma \ref{Lem:Les}, there exists an algebraically split link $\ell_1 \cup \ell_2 \subset \Sigma$ in a rational homology sphere $\Sigma$ such that $M = \Sigma + 0 \cdot \ell_1 + 0 \cdot \ell_2$. According to Lescop \cite{LE}, Section 5.1,
\begin{equation}{\label{eq:LEb2}} 
\LE(M) = -|\Tor(H_1(M))| \cdot s(\ell_1 \cup \ell_2 \subset \Sigma);
\end{equation}
it is independent of the choice of surgery presentation of $M$. We will first handle the case when $\Sigma$ is an integral homology sphere.

\begin{prop}\label{thm2.1}
Let $M = \Sigma + 0 \cdot \ell_{1} + 0 \cdot \ell_{2}$, where $\ell_1\cup \ell_2$ is a link in an integral homology sphere $\Sigma$ such that $\lk_{\Sigma}(\ell_1,\ell_2) = 0$. Then, for any admissible bundle $\ad(P)$ over $M$,
\begin{equation}{\label{eq:LEXb2}}
\I(M,P)) = 2\cdot \LE(M).
\end{equation}
\end{prop}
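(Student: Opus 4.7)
Since $\Sigma$ is an integral homology sphere, $|\Tor(H_1(M))| = 1$, so equation \eqref{eq:LEb2} reduces the claim to $\I(M,P)) = -2\,s(\ell_1 \cup \ell_2 \subset \Sigma)$.

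The plan is to iterate the Floer exact triangle twice so that all $0$-surgeries are converted to $(-1)$-surgeries and every term becomes the Floer homology of an integer homology sphere, to which Taubes' theorem applies. Applying the triangle of Section~\ref{inst} to the $0$-surgery on $\ell_2$ gives
\[
\I(M,P)) = \I((\Sigma+0\cdot\ell_1)-\ell_2)) - \I(\Sigma+0\cdot\ell_1)).
\]
Because $\lk_{\Sigma}(\ell_1,\ell_2)=0$ and the two surgeries are performed in disjoint solid tori, the surgeries commute, so $(\Sigma+0\cdot\ell_1)-\ell_2 = (\Sigma-\ell_2)+0\cdot\ell_1$. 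Both $\Sigma$ and $\Sigma-\ell_2$ are integer homology spheres in which $\ell_1$ sits as a null-homologous knot, so the $n=1$ case of the Floer exact triangle applies to each of the two $b_1=1$ terms. Substituting Taubes' identity $\I(N))=2\lambda(N)$ on each of the four integer homology spheres that appear then yields
\[
\I(M,P)) = 2\bigl[\lambda(\Sigma-\ell_1-\ell_2)-\lambda(\Sigma-\ell_1)-\lambda(\Sigma-\ell_2)+\lambda(\Sigma)\bigr].
\]

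The final step is to identify the bracketed four-term alternating sum with $-s(\ell_1\cup\ell_2\subset\Sigma)$. This is a classical consequence of the Casson surgery formula for algebraically split two-component links (Hoste, Boyer--Nicas), which writes $\lambda$ of $(\pm 1)$-surgery on $\ell_1\cup\ell_2$ as the sum of the two individual $\ell_i$-corrections plus a mixed term proportional to $s(\ell_1\cup\ell_2\subset\Sigma)$. The alternating sum above isolates precisely this mixed term, and substituting completes the proof.

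The main obstacle is this last step: matching Lescop's sign and normalization conventions for the Sato-Levine invariant against those of the Casson surgery formula in the literature. A secondary but unavoidable point is bundle bookkeeping -- the admissible $P$ on $M$ must restrict, through both triangle applications, to the trivial $SU(2)$-bundle on each integer homology sphere. This is forced by the bundle-matching condition on the link exterior and is unambiguous because $H_1(M)=\Z^2$ has no $2$-torsion, so by the remark in the $b_1=1$ case the admissible bundle on $M$ is unique.
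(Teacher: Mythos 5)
Your reduction of $\I(M,P))$ to the four-term alternating sum $2\bigl[\lambda(\Sigma-\ell_1-\ell_2)-\lambda(\Sigma-\ell_1)-\lambda(\Sigma-\ell_2)+\lambda(\Sigma)\bigr]$ is correct and is a legitimate variant of the paper's argument. The paper applies the triangle once, identifies the two resulting $b_1=1$ terms with $-\Delta''_{\ell_1\subset\Sigma-\ell_2}(1)$ and $-\Delta''_{\ell_1\subset\Sigma}(1)$, and then proves the identity $\Delta''_{\ell_1\subset\Sigma-\ell_2}(1)-\Delta''_{\ell_1\subset\Sigma}(1)=2\,s(\ell_1\cup\ell_2\subset\Sigma)$ by an explicit Seifert-matrix computation using Hoste's tube construction and the formula $V(\ell_1\subset\Sigma-\ell_2)=V(\ell_1\subset\Sigma)+E\cdot E^{\top}$. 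Your plan to instead quote the surgery formula for algebraically split two-component links, whose mixed coefficient is (up to the sign you flag) the Sato--Levine invariant, is the same computation packaged as a citation; it buys brevity at the cost of reconciling normalizations, which is precisely the content of the paper's matrix calculation. Up to that point the proposal is sound.

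The genuine gap is in your final paragraph. The claim that ``the admissible bundle on $M$ is unique'' because $H_1(M)=\Z^2$ has no $2$-torsion is false. The vanishing of $\operatorname{Ext}(H_1(M),\Z_2)$ only says that $w_2(\ad(P))$ is determined by its image in $\operatorname{Hom}(H_2(M),\Z_2)\cong\Z_2\oplus\Z_2$; admissibility requires that image to be nonzero, which leaves \emph{three} choices, $w_2(\ad(P))=(1,1)$, $(1,0)$, $(0,1)$ in the basis dual to the capped-off Seifert surfaces. The uniqueness statement you borrow from the $b_1=1$ remark holds there only because $\operatorname{Hom}(H_2(M),\Z_2)=\Z_2$ has a single nonzero element. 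Your argument --- and the first application of the Floer triangle, which requires the bundle to pair oddly with the class coming from $\ell_2$ --- covers only the case $(1,1)$, whereas the proposition asserts the formula for \emph{any} admissible bundle. The paper devotes the last paragraph of its proof to the remaining two cases: it slides $\ell_1$ over $\ell_2$ to obtain a new surgery presentation $M=\Sigma+0\cdot\ell_1+0\cdot\ell_{\sharp}$ in whose natural basis the given bundle has $w_2=(1,1)$, and then invokes the independence of the Lescop (equivalently, Sato--Levine) invariant from the surgery presentation to conclude $s(\ell_1\cup\ell_{\sharp}\subset\Sigma)=s(\ell_1\cup\ell_2\subset\Sigma)$. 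Without some such handle-slide argument your proof establishes the statement for only one of the three admissible bundles.
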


\begin{proof}
Let $P$ be a $U(2)$-bundle over $M$ such that $w_2(\ad(P))$ evaluates non-trivially on both homology classes obtained by capping off Seifert surfaces of $\ell_1$ and $\ell_2$ by meridional disks of the surgery. We will indicate this by writing $w_2(\ad(P)) = (1,1)$ in the natural basis of $\operatorname{Hom}(H_2(M),\Z_2)$. It follows from the Floer exact triangle that
\begin{align*}
	\I(M,P))
	&= \I((\Sigma+ 0 \cdot \ell_{1}) - \ell_{2}))- \I(\Sigma + 0 \cdot \ell_{1}))\\
	&= -{\Delta^{''}_{\ell_1 \subset \Sigma - \ell_2}(1)} + {\Delta^{''}_{\ell_1 \subset 						\Sigma}}(1),
\end{align*}
hence the calculation boils down to computing Alexander polynomials of $\ell_1 \subset \Sigma$ and $\ell_1 \subset \Sigma - \ell_2$.

According to Hoste \cite{HC}, there exist Seifert surfaces $F_{1}$ and $F_{2}$ of $\ell_{1}$ and $\ell_{2}$, respectively, such that $F_{1}\,\cap\,F_{2}$ is either empty or a single ribbon intersection, and furthermore, $F_{1}\,\cap\,F_{2}$ does not separate $F_{1}$ or $F_{2}$.

If $F_{1}\,\cap\,F_{2}$ is empty then $\ell_1 \cup \ell_2$ is a boundary link and \SL = $0$. Moreover, $(-1)$-surgery along $\ell_2$ leaves $F_1$ unaffected, therefore, ${\Delta_{\ell_1 \subset\Sigma}}(t)$ = $\Delta_{\ell_1 \subset \Sigma - \ell_{2}}(t)$ and $\I(M,P))= 0$ as desired. 

\bigskip\medskip

\begin{figure}[!ht]
\begin{minipage}{0.45\linewidth}
\centering
\psfrag{F1}{$F_1$}
\psfrag{F2}{$F_2$}
\includegraphics[width=\textwidth]{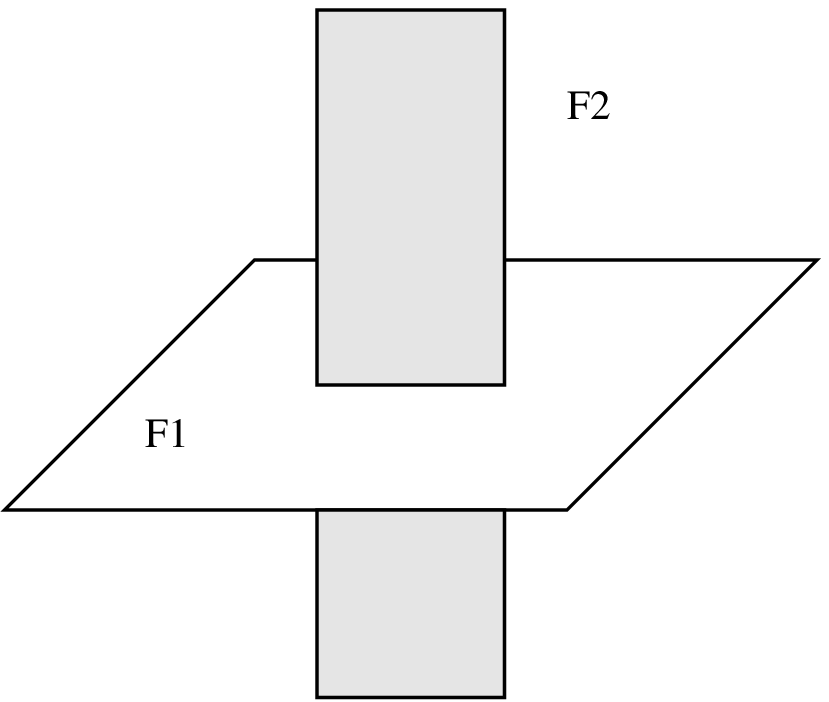}
\caption{}\label{Fig1}
\end{minipage}
\hspace{0.5cm}
\begin{minipage}{0.45\linewidth}
\centering
\psfrag{F1}{$F'_1$}
\psfrag{F2}{$F_2$}
\psfrag{e1}{$e_1$}
\includegraphics[width=\textwidth]{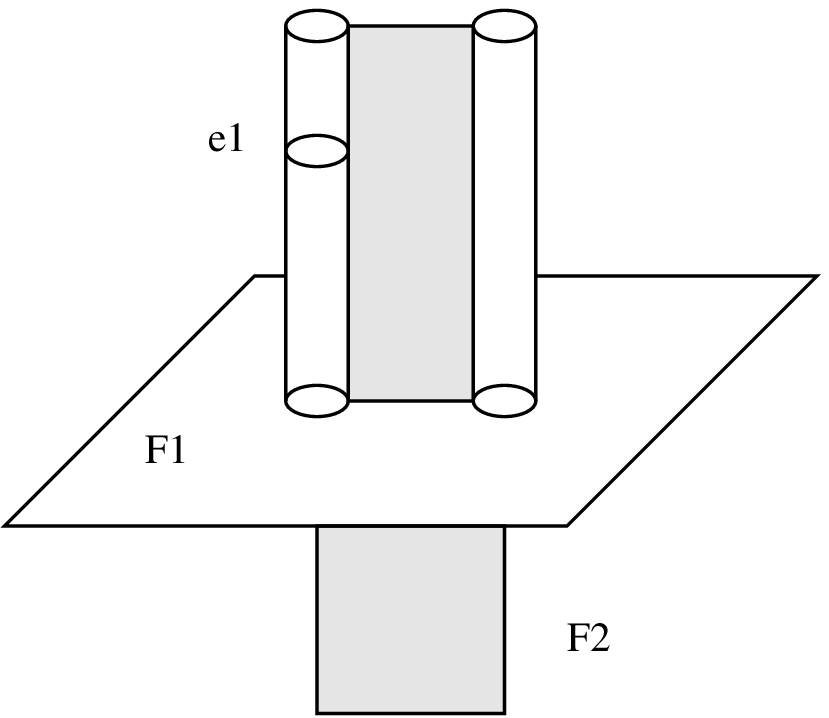}
\caption{}\label{Fig2}
\end{minipage}
\end{figure}
\medskip

If $F_{1}\,\cap\,F_{2}$ is non-empty then it is a single ribbon intersection as in Figure ~\ref{Fig1}. The intersection $F_1 \cap \partial F_2 \subset \ell_2$ consists of two points that separate $\ell_2$ into two arcs, $\ell^{'}_2$ and $\ell^{''}_2$. Stabilize surface $F_1$ by adding a tube with core $\ell^{'}_2$ and call this new modified surface $F^{'}_1$, see Figure ~\ref{Fig2}. If $F_1$ had genus $g$, the genus of $F^{'}_1$ will be $g+1$.

The intersection $F_{1}^{'} \cap F_{2}$ is a closed loop $c$ which represents a primitive homology class in $H_1(F^{'}_1)$. Complete $c$ to a basis $\{ e_{1}, c, e_{3},...,e_{2g+2} \}$ of $H_{1}(F_{1}^{'})$,  where $e_1$ is a meridional curve of $\ell_2$ and $\{ e_{3},...,e_{2g+2}\}$ is a basis for  $H_{1}(F_{1})$. Using general position argument we will assume that ${e_m} \cap F_{2} = \emptyset$ and hence $\lk_{\Sigma}(e_m, \ell_2)$ = $0$ for $m \geq 3$. In addition, it is obvious that $\lk_{\Sigma}(e_1, \ell_2) = \pm 1$, and that $\lk_{\Sigma}(c,\ell_2)$ = $\lk_{\Sigma}(c^+,\ell_2)$ = $0$, where $c^+$ is a positive push-off of $c$ with respect to $F_2$. To summarize, we have the matrix

\begin{equation*}
	\label{E}	
	E \;=
	\begin{bmatrix}	
	\lk_{\Sigma}(e_{1},\ell_{2}) \\
	\lk_{\Sigma}(c,\ell_{2})\\
	\vdots \\
	\lk_{\Sigma}(e_{2g+2},\ell_{2})\\
	\end{bmatrix}
	=
	\begin{bmatrix}
		\pm 1\\
		0\\
		\vdots\\
		0\\
	\end{bmatrix}.
\end{equation*}

\medskip\noindent
This matrix accounts for the difference in the Seifert matrices of $\ell_1$ when viewed as a knot in $\Sigma$ and $\Sigma - \ell_2$. To be precise, according to Hoste \cite{HC},
\begin{equation}{\label{eqn:Hoste}}
 V(\ell_{1} \subset\Sigma - \ell_2) = V(\ell_{1} \subset\Sigma) + E \cdot E^\top.
\end{equation}

We would like next to determine the Seifert matrix $V(\ell_{1} \subset \Sigma)$ with respect to the basis $\{e_1,c, e_3, \ldots e_{2g+2}\}$ in $H_1(F^{'}_1)$. To begin, $\lk_{\Sigma}(e_{1},e_{1}^{+})$ = $0$ since the push off $e_1^+$ is disjoint from the meridional disk $D$ of $e_1$. For $m \geq 3$, the curve $e_m$ lies on the surface $F_1$. Therefore, its push-off $e_m^{+}$ can be isotoped to make it disjoint from $D$ so $\lk_{\Sigma}(e_1,e_m^{+}) =0$. Similarly $\lk_{\Sigma}(e_m, e_1^{+}) = 0$ for $m \geq 3$. Next, $\lk_{\Sigma}(c, e_1^{+})$ = $\pm 1$ and since $c^+$ does not intersect the meridional disk bounded by $e_1$, $\lk_{\Sigma}(e_1,c^{+})$ = 0. To finish the calculation, observe that $V(\ell_{1} \subset \Sigma) - V(\ell_{1} \subset \Sigma)^{\top} =  - I$, where $I: H_1(F_{1}^{'}) \times H_1(F_{1}^{'}) \rightarrow \Z$ denotes the intersection form given by $I(v,w)$ = $v \cdot w$. By general position argument, $c$ $\cdot$ $e_m$ = $0$ hence $\lk_{\Sigma}(c^{+}, e_m)- \lk_{\Sigma}(c,e_{m}^{+}) = 0$. For $m\geq 3$, denote $\lk_{\Sigma}(c^{+}, e_m)= \lk_{\Sigma}(c,e_{m}^{+})= a_{m-2}$. Finally, $\lk_{\Sigma}(c,c^{+})$= $s$ by the definition of $s$ = $s(\ell_1 \cup \ell_2 \subset \Sigma)$. Therefore, we obtain the matrix

\begin{center}
	$V(\ell_1 \subset \Sigma)=$
	$\left[ 
	\begin{array}{cc|cccc}
	0 & 0 &  & 0 &  \cdots & 0 \\
	\pm 1 & s &  & a_1 & \cdots &a_{2g}\\ \hline
   0 & a_1 & & \multicolumn{3}{c}{\multirow{3}{*}{\raisebox{-7mm}{\scalebox{1}{$W$}}}} \\
   \raisebox{2pt}{\vdots} & \vdots & & & &\\
   0 &a_{2g} & & & &
	\end{array}
	\right]$
\end{center}
where $W$ is the Seifert matrix of $\ell_1$ with respect to the basis $e_3, \ldots e_{2g} \in H_{1}(F_1)$.
Using Hoste's formula (\ref{eqn:Hoste}), we obtain
\begin{center}
	$V(\ell_1 \subset \Sigma - \ell_2)$ =
	$\left[ 
	\begin{array}{cc|cccc}
	1 & 0 & & 0 &  \cdots & 0 \\
	\pm 1 & s &  & a_1 & \cdots &a_{2g}\\ \hline
   0  & a_1 & & \multicolumn{3}{c}{\multirow{3}{*}{\raisebox{-7mm}{\scalebox{1}	{$W$}}}} 	\\
   \raisebox{2pt}{\vdots} & \vdots & & & &\\
   0 &a_{2g} & & & 
	\end{array}
	\right]$
\end{center}
Now we are ready to compute the Alexander polynomials. Denote $V(\ell_1 \subset \Sigma - \ell_2)$ by $V$ and let $z = t^{1/2} - t^{-1/2}$. Then
\begin{center}
	$ t^{1/2}V - t^{-1/2}V^{\top}=
	\left[ 
	\begin{array}{cc|cccc}
	z & \mp t^{-1/2} & & 0 &  \cdots & 0 \\
	\pm t^{1/2} & sz & & a_1z & \cdots &a_{2g}z\\ \hline
   0  & a_1z & & \multicolumn{3}{c}{\multirow{3}{*}{\raisebox{-7mm}{\scalebox{1}					{$t^{1/2}W - t^{-1/2}W^{\top}$}}}} \\
   \raisebox{2pt}{\vdots} & \vdots & & & &\\
   0 &a_{2g}z & & & & 
	\end{array}
	\right]$
\end{center}
Hence
$\det(t^{1/2}V - t^{-1/2}V^{\top})=	$

\begin{align*}
	&= \:z
	\left|
	\begin{array}{c|cccc}
		sz &  & a_1z & \cdots &a_{2g}z\\ \hline
    	a_1z &  & \multicolumn{3}{c}{\multirow{3}{*}{\raisebox{-7mm}{\scalebox{1}										{$t^{1/2}W - t^{-1/2}W^{\top}$}}}} \\
    	\raisebox{2pt}{\vdots} & & & &\\
    	a_{2g}z &  & & 
	\end{array}
	\right|
	\mp t^{-1/2}
	\left|
	\begin{array}{c|cccc}
		\mp t^{-\frac{1}{2}} &  & 0 &  \cdots & 0 \\ \hline
		a_{1}z  & & \multicolumn{3}{c}{\multirow{3}{*}{\raisebox{-7mm}{\scalebox{1}										{$t^{1/2}W - t^{-1/2}W^{\top}$}}}} \\
    	\raisebox{2pt}{\vdots}  & & & &\\
    	a_{2g}z & & & & 
	\end{array}
	\right| \\*
	\end{align*}
	
	\begin{align*}
	&=z^3\;
	\left|
	\begin{array}{c|cccc}
		sz^{-1} &  & a_1 & \cdots &a_{2g}\\ \hline
    	a_1 &  & \multicolumn{3}{c}{\multirow{3}{*}{\raisebox{-7mm}{\scalebox{1}										{$t^{1/2}W - t^{-1/2}W^{\top}$}}}} \\
    	\raisebox{2pt}{\vdots} & & & &\\
    	a_{2g} & & & & 
	\end{array}
	\right|
	\;+\; \det(t^{1/2}W - t^{-1/2}W^{\top}) \\
	& \\
	&=s\,z^2 \cdot \det(t^{1/2}W - t^{-1/2}W^{\top}) + z^3 \cdot f(t) + 			\Delta_{\ell_{1} \subset \Sigma}(t)
	\end{align*}	
for some function $f(t)$, which is a polynomial in $t^{1/2}$ and $t^{-1/2}$. Therefore, 
\begin{equation*}
\Delta_{\ell_{1} \subset \Sigma - \ell_{2}}(t) = sz^{2}\Delta_{\ell_{1} \subset \Sigma}(t) + z^{3}f(t) + \Delta_{\ell_{1} \subset \Sigma}(t).
\end{equation*}
Next we differentiate twice and set $t=1$. An easy calculation taking into account that $z(1)=0$ and $z'(1) = 1$ leads to the formula
\begin{equation*}
\Delta^{''}_{\ell_{1} \subset \Sigma - \ell_{2}}(1) = 2s\cdot\Delta_{\ell_{1} \subset \Sigma }(1)+	\Delta^{''}_{\ell_{1} \subset \Sigma}(1) = 2s + \Delta^{''}_{\ell_1 \subset\Sigma}(1).
\end{equation*}
Therefore,
\begin{equation*}
	\I(M,P)) = -2s(\ell_1 \cup \ell_2 \subset \Sigma) = 2\cdot \LE(M).
\end{equation*}

The above calculation holds for a specific bundle which has $w_2(\ad(P)) = (1,1)$. We will now prove the result for admissible bundles with $w_2(\ad(P))$ = $(0,1)$ and $w_2(\ad(P))$ = $(1,0)$ using the fact that $\LE(M) = - |\Tor(H_1(M))|\;\cdot$ \SL is an invariant of the manifold $M$ and therefore is independent of the surgery presentation. 

Without loss of generality, let us assume that $w_2(\ad(P)) = (1,0)$. After sliding $\ell_1$ over $\ell_2$, we will obtain a new surgery presentation for $M$, namely, $M = \Sigma + 0 \cdot \ell_1 + 0 \cdot \ell_{\sharp}$, where $\ell_{\sharp}$ is the new knot obtained by sliding $\ell_1$ over $\ell_2$. Note that $\ell_{\sharp}$ bounds a Seifert surface which is a band sum of $F_1$ and $F_2$ and also that $\lk_{\Sigma}(\ell_1,\ell_{\sharp})$ = 0. In the new basis, $w_2(\ad(P)) =(1,1)$, hence $\I(M,P)) = s(\ell_1 \cup \ell_{\sharp} \subset \Sigma)$ by the above argument. The independence of surgery presentation then implies that $s(\ell_1 \cup \ell_{\sharp} \subset \Sigma)$ = \SL and therefore $\I(M,P))$ is independent of the choice of admissible bundle. 
\end{proof}

\begin{example}
Given a two component link $\ML = \ell_1 \cup\,\ell_2$ in an integral homology sphere $\Sigma$, Harper and Saveliev \cite{HS} defined its Floer homology $I_{*}(\Sigma, \ML)$ as follows. The link exterior $X = \Sigma \setminus \operatorname{int} \;N(\ML)$ is a compact manifold whose boundary consists of two 2-tori. Furl it up by gluing the boundary tori together via an orientation reversing diffeomorphism $\varphi: T^2 \longrightarrow T^2$ chosen so that the resulting closed manifold $X_{\varphi}$ has homology of $S^1 \times S^2$. Then $I_{*}(\Sigma, \ML)$ = $I_{*}(X_{\varphi})$. 

The manifold $X_{\varphi}$ has the following surgery description, see \cite{HS}. Attach a band from one component of $\ML$ to the other, and call the resulting knot $k_1$. Let $k_2$ be a small unknotted circle going once around the band with linking number zero. Then $X_{\varphi}$ is the manifold obtained from $\Sigma$ by performing surgery on the link $k_1 \cup\, k_2$, with $k_1$ framed by $\pm 1$ and $k_2$ framed by $0$. A quick argument with Floer exact triangle shows that $I_{*}(X_{\varphi}) = I_{*}(Y,P)$, where $Y$ is the manifold obtained by surgery on the link $k_1\cup\, k_2$ with both components framed by zero. Since the link $k_1 \cup\, k_2$ is algebraically split, it follows from equations (\ref{eq:LEb2}) and (\ref{eq:LEXb2}) that 
\begin{equation*}
\chi (I_{*}(\Sigma, \ML)) = \chi(I_{*}(Y,P)) = -2 s(k_1 \cup k_2).
\end{equation*} 
A straightforward calculation then shows that $s(k_1 \cup k_2) = \pm \lk_{\Sigma}\,(\ell_1, \ell_2)$, which recovers the formula $\chi (I_{*}(\Sigma, \ML))  = \pm 2 \cdot \lk_{\Sigma}\,(\ell_1, \ell_2)$ of \cite{HS}. 
\end{example}


Before we move on to the case when $H_1(M)$ has non-trivial torsion, we will establish a fact which we will need for the calculations. Recall that if $\Sigma$ is a rational homology sphere and $k$ is an arbitrary knot in $\Sigma$, then there exists an integer $q$ such that $q \cdot k$ represents a zero in $H_1(\Sigma)$. For any other knot $\ell$ in $\Sigma$, the linking number $\lk_{\Sigma} (q\cdot k,\ell)$ is defined as the intersection number of a Seifert surface of $q\cdot k$ with $\ell$, and one further defines
\begin{equation*}
\lk_{\Sigma}(k,\ell) = \dfrac 1 q\cdot \lk_{\Sigma} (q\cdot k,\ell)\;\in\; \mathbb Q.
\end{equation*}
\begin{lemma}{\label{Lem:Hos}}
Let $\Sigma$ be a rational homology sphere, $\ell_2 \subset \Sigma$ a null-homologous knot and $k_1, k_2$  knots in $\Sigma \smallsetminus N(\ell_2)$. Then,  
\begin{equation}
\label{Eqn}
\lk_{\Sigma - \ell_2} (k_1,k_2) =\lk_{\Sigma} (k_1, k_2) + \lk_{\Sigma} (k_1, \ell_2) \cdot \lk_{\Sigma} (k_2,\ell_2).
\end{equation}
\end{lemma}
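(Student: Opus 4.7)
The plan is to adapt Hoste's Seifert-matrix argument behind (\ref{eqn:Hoste}) from basis elements on a Seifert surface to arbitrary knots $k_1, k_2$ in the complement of $\ell_2$: construct an explicit $2$-chain in $\Sigma - \ell_2$ bounded by a multiple of $k_1$, and then compute its intersection number with $k_2$. Since $H_1(\Sigma)$ is finite, fix an integer $q > 0$ such that $q k_i$ is null-homologous in $\Sigma$ for $i = 1, 2$. A short Mayer--Vietoris calculation (using that the preferred longitude $\lambda$ of $\ell_2$ bounds in $X := \Sigma \setminus \operatorname{int}(N(\ell_2))$, because $\ell_2$ is null-homologous) shows that $H_1(\Sigma - \ell_2)$ is also finite, so the same $q$ makes $q k_i$ null-homologous in $\Sigma - \ell_2$.

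Next choose Seifert surfaces $F_1 \subset \Sigma$ for $q k_1$ and $F_0 \subset \Sigma$ for $\ell_2$, placed in general position with respect to each other, to $\ell_2$, and to $k_2$. Write $\Sigma - \ell_2 = X \cup V$, where $V$ is the new solid torus glued so that its meridian disk $D$ satisfies $\partial D = \mu - \lambda$ on $\partial X$ (the $(-1)$-surgery slope). Setting $n_1 := F_1 \cdot \ell_2 = q\,\lk_{\Sigma}(k_1, \ell_2)$, the boundaries on $\partial X$ are
\[
\partial(F_1 \cap X) = q k_1 - n_1\,\mu, \qquad \partial(F_0 \cap X) = \lambda, \qquad \partial D = \mu - \lambda,
\]
so the $2$-chain
\[
\Gamma := (F_1 \cap X) + n_1\, D + n_1\, (F_0 \cap X) \subset \Sigma - \ell_2
\]
has $\partial \Gamma = q k_1$, the toral boundary terms cancelling.

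To conclude, I compute $\Gamma \cdot k_2$. Because $D \subset V$ is disjoint from $k_2 \subset X$, only $F_1 \cap X$ and $F_0 \cap X$ contribute, giving $F_1 \cdot k_2 = q\,\lk_{\Sigma}(k_1, k_2)$ and $F_0 \cdot k_2 = \lk_{\Sigma}(k_2, \ell_2)$ respectively. Hence
\[
q\,\lk_{\Sigma - \ell_2}(k_1, k_2) = \Gamma \cdot k_2 = q\,\lk_{\Sigma}(k_1, k_2) + n_1\,\lk_{\Sigma}(k_2, \ell_2),
\]
and dividing by $q$ yields the claimed formula.

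The main obstacle will be sign bookkeeping. Orientations of $\mu$, $\lambda$, $\partial D$, and the boundaries of $F_1 \cap X$ and $F_0 \cap X$ --- in particular, the $-n_1 \mu$ arising from cutting small disks out of $F_1$ at its transverse intersections with $\ell_2$ (where the new boundary circle acquires the orientation opposite to $\partial D_p$) --- must all be fixed coherently so that the correction term $\lk_{\Sigma}(k_1, \ell_2)\,\lk_{\Sigma}(k_2, \ell_2)$ carries the $+$ sign dictated by the $(-1)$-surgery convention of the paper and by Hoste's Seifert-matrix identity (\ref{eqn:Hoste}). Once the boundaries above are computed with the correct signs, the rest is routine chain-level algebra.
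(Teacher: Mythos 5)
Your argument is correct, but it takes a different route from the paper. The paper's proof is a two-line reduction: for $k_1,k_2$ null-homologous in $\Sigma\smallsetminus N(\ell_2)$ it simply cites Hoste's Lemma 1.2 verbatim, and for arbitrary $k_1,k_2$ it multiplies by integers $q_1,q_2$ to land in that case, applies the formula to $q_1k_1$ and $q_2k_2$, and divides by $q_1q_2$ using the $\mathbb{Q}$-bilinearity built into the definition of rational linking numbers. You instead reprove the geometric content from scratch: the explicit $2$-chain $\Gamma=(F_1\cap X)+n_1 D+n_1(F_0\cap X)$ with $\partial\Gamma=qk_1$ is exactly the right object, the boundary cancellation on $\partial X$ works with the $(-1)$-surgery slope $\partial D=\mu-\lambda$, and the intersection count $\Gamma\cdot k_2=q\,\lk_\Sigma(k_1,k_2)+n_1\,\lk_\Sigma(k_2,\ell_2)$ gives the formula after division by $q$ (well-definedness of the quotient uses $H_2(\Sigma-\ell_2)=0$, which your Mayer--Vietoris remark supplies). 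What your approach buys is self-containedness and slightly more flexibility --- you only need $qk_1$ to bound in $\Sigma$, not in the link exterior, and the single chain-level computation handles the non-null-homologous case directly rather than by a scaling trick; what the paper's approach buys is brevity, since the entire geometric construction is outsourced to Hoste. Your deferral of the final sign check is acceptable because the signs you wrote down are already consistent: the $+$ sign of the correction term is forced by $\partial D=\mu-\lambda$, and would flip for $(+1)$-surgery, matching Hoste's identity (\ref{eqn:Hoste}).
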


\begin{proof}
If $k_1, k_2$ are null-homologous in $\Sigma \smallsetminus N(\ell_2)$, the proof proceeds exactly as in \cite[Lemma 1.2]{HC}. If $k_1$ and $k_2$ are arbitrary knots in $\Sigma \smallsetminus N(\ell_2)$, there exist non-zero integers $q_1, q_2$ such that $q_1 \cdot k_1$ and $q_2 \cdot k_2$ are null-homologous in $\Sigma \smallsetminus N(\ell_2)$ and therefore in $\Sigma - \ell_2$. Then 
\begin{equation*}
\lk_{\Sigma - \ell_2}(q_1 \cdot k_1, q_2 \cdot k_2) = \lk_{\Sigma}(q_1 \cdot k_1, q_2 \cdot k_2) + \lk_{\Sigma}(q_1 \cdot k_1, \ell_2) \cdot \lk_{\Sigma}(q_2 \cdot k_2, \ell_2)
\end{equation*}
and the result follows by dividing both sides by $q_1 \cdot q_2$.
\end{proof}

\begin{prop}{\label{thm2:2}}
Let $M = \Sigma + 0 \cdot \ell_{1} + 0 \cdot \ell_2$, where $\Sigma$ is a rational homology sphere and $\ell_1, \ell_2 \subset \Sigma$ are null-homologous knots such that $\lk_{\Sigma}(\ell_1,\ell_2) = 0$. Then
\begin{equation*}
\I(M,P)) = \displaystyle \frac{2 \cdot \LE(M)}{|\Tor(H_1(M))|}
\end{equation*}

\smallskip\noindent
for any admissible bundle $\ad(P)$ over $M$ whose restriction to the exterior of $\ell_1 \cup \ell_2 \subset \Sigma$ is trivial.  
\end{prop}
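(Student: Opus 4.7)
\emph{Plan.} The strategy is to extend the argument of Proposition \ref{thm2.1} from integral to rational homology spheres, with Lemma \ref{Lem:Hos} replacing the integral linking-number transformation implicit in Hoste's work \cite{HC}. First I would apply the Floer exact triangle to write
\begin{equation*}
\I(M,P)) \;=\; \I((\Sigma - \ell_2) + 0 \cdot \ell_1)) \,-\, \I(\Sigma + 0 \cdot \ell_1)),
\end{equation*}
and observe that each term on the right is the instanton Floer Euler characteristic of a $b_1 = 1$ manifold. Since $\ell_2$ is null-homologous in $\Sigma$, the manifold $\Sigma - \ell_2$ remains a rational homology sphere with $|H_1(\Sigma - \ell_2)| = |H_1(\Sigma)|$; since $\lk_\Sigma(\ell_1, \ell_2) = 0$, a Seifert surface of $\ell_1$ can be pushed off $\ell_2$, so $\ell_1$ is still null-homologous in $\Sigma - \ell_2$. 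The admissible-bundle hypothesis on $P$ descends to admissible bundles on each summand (the new solid torus glued in by $(-1)$-surgery on $\ell_2$ contributes no obstruction since $H^2$ of a solid torus is trivial), so the proposition from the $b_1 = 1$ section applies and gives $\I(M,P)) = - \Delta''_{\ell_1 \subset \Sigma - \ell_2}(1) + \Delta''_{\ell_1 \subset \Sigma}(1)$.

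The next step is to import the surface-theoretic construction from Proposition \ref{thm2.1}. Take Seifert surfaces $F_1, F_2$ of $\ell_1, \ell_2$ arranged so that $F_1 \cap F_2$ is either empty or a single ribbon arc; if empty then $s(\ell_1 \cup \ell_2 \subset \Sigma) = 0$, $(-1)$-surgery on $\ell_2$ preserves the Seifert matrix of $\ell_1$, and both sides vanish. In the ribbon case, stabilize $F_1$ to $F_1'$ by tubing along an arc of $\ell_2$ cut off by the ribbon, and choose the basis $\{e_1, c, e_3, \ldots, e_{2g+2}\}$ of $H_1(F_1')$ exactly as in Proposition \ref{thm2.1}, with $e_1$ a meridian of $\ell_2$ and $c = F_1' \cap F_2$. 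The decisive input is that Lemma \ref{Lem:Hos} is precisely the rational-homology-sphere analogue of the linking-number shift used in \cite{HC}, so that entry-by-entry the relation
\begin{equation*}
V(\ell_1 \subset \Sigma - \ell_2) = V(\ell_1 \subset \Sigma) + E \cdot E^\top, \qquad E = (\pm 1, 0, \ldots, 0)^\top,
\end{equation*}
continues to hold. With this the determinantal manipulation of Proposition \ref{thm2.1} carries over verbatim, producing
\begin{equation*}
\det(t^{1/2}V' - t^{-1/2}V'^\top) - \det(t^{1/2}V - t^{-1/2}V^\top) = sz^2 \det(t^{1/2}W - t^{-1/2}W^\top) + z^3 f(t).
\end{equation*}

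To finish I would multiply through by $|H_1(\Sigma)| = |H_1(\Sigma - \ell_2)| = |\Tor(H_1(M))|$, converting each determinant into the corresponding Alexander polynomial, and then differentiate twice at $t = 1$ using $z(1) = 0$ and $z'(1) = 1$. The $z^3 f(t)$ term contributes nothing and the leading contribution produces $2s(\ell_1 \cup \ell_2 \subset \Sigma)$ times the appropriate normalization factor. Combining with the Lescop formula (\ref{eq:LEb2}) yields the claimed identity $\I(M,P)) = 2 \LE(M)/|\Tor(H_1(M))|$. The main obstacle is geometric rather than algebraic: one must verify that Hoste's construction of Seifert surfaces with at most a single ribbon intersection, originally phrased for links in $S^3$, still works in a rational homology sphere. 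The check amounts to noting that his innermost-disk arguments are local to tubular neighborhoods of $F_1$ and $F_2$ and do not depend on the global topology of the ambient manifold. The other point requiring attention is keeping careful track of the normalization $\Delta_{\ell_1 \subset \Sigma}(1) = |H_1(\Sigma)|$ through the differentiation; this is precisely what accounts for the $|\Tor(H_1(M))|$ factor in the denominator of the final formula.
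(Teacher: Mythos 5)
Your proposal follows the paper's proof step for step: the same Floer exact triangle decomposition, the same reduction to the difference of two Alexander polynomial second derivatives via the $b_1=1$ proposition, the same Hoste-style surfaces $F_1'$, $F_2$ with basis $\{e_1,c,e_3,\dots,e_{2g+2}\}$, and the same use of Lemma \ref{Lem:Hos} to justify the formula \eqref{eqn:Hoste} in a rational homology sphere. Your two ``points requiring attention'' (Hoste's surface construction away from $S^3$, and the normalization of $\Delta$) are exactly the two points the paper's proof also treats lightly. So the route is identical.

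The one genuine gap is the final normalization, which you defer to a remark instead of computing, and which as stated does not land on the claimed formula. With the paper's normalization $\Delta_{k\subset\Sigma}(t)=|H_1(\Sigma)|\det(t^{1/2}V-t^{-1/2}V^\top)$, multiplying the determinant identity by $|H_1(\Sigma)|=|H_1(\Sigma-\ell_2)|$ and differentiating twice at $t=1$ gives
\begin{equation*}
\Delta''_{\ell_1\subset\Sigma-\ell_2}(1)-\Delta''_{\ell_1\subset\Sigma}(1)
= 2s\cdot\Delta_{\ell_1\subset\Sigma}(1)
= 2s\,|H_1(\Sigma)|,
\end{equation*}
so the computation you describe yields $\I(M,P)) = -2s\,|\Tor(H_1(M))| = 2\LE(M)$ rather than $2\LE(M)/|\Tor(H_1(M))|$. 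In particular your closing claim --- that the normalization $\Delta_{\ell_1\subset\Sigma}(1)=|H_1(\Sigma)|$ ``accounts for the $|\Tor(H_1(M))|$ factor in the denominator'' --- is backwards: carried through literally, that normalization places $|\Tor(H_1(M))|$ in the \emph{numerator}. The paper's proof sidesteps this by asserting the unnormalized answer $-2s(\ell_1\cup\ell_2\subset\Sigma)$ with a reference back to Proposition \ref{thm2.1}. To close your argument you must actually reconcile the normalization of the $b_1=1$ input (does $\I(\Sigma+0\cdot\ell_1))$ equal $-\Delta''(1)$ for the normalized or the unnormalized polynomial?) with the factor of $|H_1(\Sigma)|$ produced by $\Delta_{\ell_1\subset\Sigma}(1)$, and verify that the result agrees with the Lescop formula \eqref{eq:LEb2}; as written, the last step is not a proof of the stated identity.
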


\begin{proof}
Let $P$ be a $U(2)$-bundle over M such that $w_2(\ad(P))$ evaluates non-trivially on both homology classes obtained by capping off Seifert surfaces of $\ell_1$ and $\ell_2$ by meridional disks of the surgery. By the Floer exact triangle, 
\begin{align*}
	\I(\Sigma + 0 \cdot \ell_1 + 0 \cdot \ell_2))
	&= \I((\Sigma + 0 \cdot \ell_{1})- \ell_{2}))- \I(\Sigma + 0 \cdot \ell_{1}))\\
	&= -\Delta_{\ell_1 \subset \Sigma - \ell_2}^{''}(1) + \Delta_{\ell_1 \subset \Sigma}^{''}(1). 
\end{align*}
We proceed as in the proof of Proposition \ref{thm2.1} and choose Seifert surfaces $F_1$ and $F_2$ which are either disjoint or intersect in a single ribbon, and construct Seifert surfaces $F^{'}_{1}$ and $F_2$ which intersect in a circle $c$. Complete $c$ to a basis $\{ e_{1}, c, e_{3},...,e_{2g+2} \}$ of $H_{1}(F_{1}^{'})$, where $e_1$ is a meridional curve of $\ell_2$ and $\{ e_{3},...,e_{2g+2}\}$ is a basis for  $H_{1}(F_{1})$. Again,

\begin{equation*}
E\;=\;
	\begin{bmatrix}	
		\lk_{\Sigma}(e_{1},\ell_{2}) \\
		\lk_{\Sigma}(c,\ell_{2})\\
		\vdots \\
		\lk_{\Sigma}(e_{2g+2},\ell_{2})\\
	\end{bmatrix}
	\; = \;
	\begin{bmatrix}
		\pm 1 \\
		0\\
		\vdots\\
		0\\
	\end{bmatrix}
\end{equation*}

\bigskip
We would like to determine the Seifert matrix $V(\ell_1 \subset \Sigma)$ with respect to the basis $\{ e_{1}, c, e_{3},...,e_{2g+2} \}$ of $H_1(F^{'}_1)$. As $e_1$ bounds a meridional disk $D$, it is null-homologous in $\Sigma$. Since $e_1^{+}$ is disjoint from $D$, we have $\lk_{\Sigma}(e_1, e_1^{+}) = 0$. As in the torsion free case, $\lk_{\Sigma}(c,e_1^{+}) = 0$ and $\lk_{\Sigma}(e_1,c^+) = \pm 1$. For $m \geq 3$, the disk $D$ can be picked to be disjoint from $e_m^{+}$ hence $\lk_{\Sigma}(e_1,e_1^{+}) =0$ and similarly $\lk_{\Sigma}(e_m,e_1^{+}) = 0$. By definition, $\lk_{\Sigma}(c,c^+) =s$. 
Finally,
$\lk_{\Sigma}(c^+, e_m) = \lk_{\Sigma}(c, e_m^+)$ using the fact that $V(\ell_{1} \subset \Sigma) - V(\ell_{1} \subset \Sigma)^{\top} =  - I$, where I is the intersection form on $H_1(F^{'}_1; \mathbb{Q})$.
For $m \geq 3$ denote $\lk_{\Sigma}(c^+, e_m) =  \lk_{\Sigma}(c, e_m^+) = a_{m-2}$. Then

\begin{center}
	$V(\ell_1 \subset \Sigma)=$
	$\left[ 
	\begin{array}{cc|cccc}
	0 & 0 &  & 0 &  \cdots & 0 \\
	\pm 1 & s &  & a_1 & \cdots &a_{2g}\\ \hline
   0 & a_1 & & \multicolumn{3}{c}{\multirow{3}{*}{\raisebox{-7mm}{\scalebox{1}{$W$}}}} \\
   \raisebox{2pt}{\vdots} & \vdots & & & &\\
   0 &a_{2g} & & & &
	\end{array}
	\right]$
\end{center}

\medskip\noindent
Because of Lemma \ref{Lem:Hos}, the Hoste formula (\ref{eqn:Hoste}) still holds. Therefore, the rest of the proof works out as in Proposition \ref{thm2.1} to show that 
\begin{equation*}
\I(M,P)) = -2s(\ell_1\cup \ell_2 \subset \Sigma) = \displaystyle \frac {2 \cdot \LE(M)}{|\Tor(H_1(M))|}
\end{equation*}
and that this is independent of the choice of admissible bundle which restricts to a trivial bundle on the exterior of $\ell_1\cup\ell_2 \subset \Sigma$.
\end{proof}

\begin{remark}
The only remaining ambiguity in choosing an admissible bundle $\ad(P)$ over $M$ has to do with the group $\operatorname{Ext}(H_1(M),\Z_2)$. If $H_1(M)$ has no $2$-torsion, this group vanishes and Proposition \ref{thm2:2} holds for an arbitrary admissible bundle.
\end{remark}


\section{Case of $b_1(M) = 3$}
Let $\ML = \ell_1 \cup \ell_2 \cup \ell_3$ be an algebraically split oriented three-component link in a rational homology sphere $\Sigma$ such that each component of $\ML$ is null-homologous. Let $F_1, F_2, F_3$ be Seifert surfaces of the knots $\ell_1, \ell_2, \ell_3$, respectively, chosen so that $F_i \cap \ell_j = \emptyset$ for $i \ne j$. Define the Milnor triple linking number $\mu(\ell_1,\ell_2,\ell_3)$ as a signed count of points in the intersection $F_1\cap F_2\cap F_3$.

Given a closed oriented connected $3$-manifold with $b_1(M)$ = $3$, by Lemma \ref{Lem:Les}, there exists an algebraically split link $\ML = \ell_1 \cup \ell_2 \cup \ell_3$ in a rational homology sphere $\Sigma$ such that $M = \Sigma + 0\cdot \ell_1 + 0 \cdot \ell_2 + 0 \cdot \ell_3$ and the compo\-nents of $\ML$ are all null-homologous. According to Lescop \cite{LE}, 
\begin{equation} \label{{eq:LEb3}}
\LE(M) = |\Tor(H_1(M))|\cdot (\mu(\ell_1,\ell_2,\ell_3))^2.
\end{equation}
Again, $\LE(M)$ is independent of the choice of surgery presentation as above.
 
\begin{prop}{\label{thm3}}
Let $M = \Sigma + 0 \cdot \ell_{1} + 0 \cdot \ell_{2} + 0 \cdot \ell_{3}$, where $\Sigma$ is a rational homology sphere, $\ell_1, \ell_2, \ell_3 \subset \Sigma$ are null-homologous knots such that $\lk_{\Sigma}(\ell_{i}, \ell_{j})$ =0 for $i \neq j$. Then 
\begin{equation}\label{eq:chib3}
\I(M,P)) = \displaystyle - \frac{2\cdot \LE(M)}{|\Tor(H_1(M))|}
\end{equation}
for any admissible bundle $\ad(P)$ over $M$ whose restriction to the exterior of $\ell_1 \cup \ell_2 \cup \ell_3 \subset \Sigma$ is trivial. 
\end{prop}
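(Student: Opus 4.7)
The plan is to run the same inductive step that took us from $b_1=1$ to $b_1=2$, now using Proposition \ref{thm2:2} as the base case and producing the Milnor triple invariant from differences of Sato--Levine invariants. First I will apply the Floer exact triangle in the component $\ell_3$ to write
\[
\I(M,P)) = \I((\Sigma-\ell_3)+0\cdot\ell_1+0\cdot\ell_2)) - \I(\Sigma+0\cdot\ell_1+0\cdot\ell_2)).
\]
Lemma \ref{Lem:Hos} ensures that $\ell_1\cup\ell_2$ remains algebraically split in the rational homology sphere $\Sigma-\ell_3$ (since $\lk_{\Sigma}(\ell_i,\ell_3)=0$), with null-homologous components, so each term falls under the hypotheses of Proposition \ref{thm2:2}. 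Substituting the formula from that proposition reduces the problem to computing
\[
\I(M,P)) = -2\bigl(s(\ell_1\cup\ell_2\subset\Sigma-\ell_3) - s(\ell_1\cup\ell_2\subset\Sigma)\bigr).
\]

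The heart of the argument is then to identify this Sato--Levine difference with $\mu(\ell_1,\ell_2,\ell_3)^2$. Following the template of Proposition \ref{thm2.1}, I will pick Seifert surfaces $F_1,F_2,F_3$ for $\ell_1,\ell_2,\ell_3$ with $F_i\cap\ell_j=\emptyset$ for $i\ne j$, arrange $F_1\cap F_2$ after stabilization to be a single circle $c$ (the empty case forcing $\mu=0$ and making the identity trivial), and take $c^+$ to be a push-off along $F_2$. Because $F_1,F_2$ and hence $c,c^+$ already avoid $\ell_3$, the same data also compute the Sato--Levine invariant in $\Sigma-\ell_3$. Lemma \ref{Lem:Hos} applied to $(k_1,k_2)=(c,c^+)$ gives
\[
\lk_{\Sigma-\ell_3}(c,c^+) - \lk_{\Sigma}(c,c^+) = \lk_{\Sigma}(c,\ell_3)\cdot \lk_{\Sigma}(c^+,\ell_3) = \lk_{\Sigma}(c,\ell_3)^2,
\]
and, since $\ell_3$ is null-homologous, $\lk_{\Sigma}(c,\ell_3)$ equals the algebraic intersection $c\cdot F_3 = F_1\cap F_2\cap F_3 = \mu(\ell_1,\ell_2,\ell_3)$. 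Combining, $\I(M,P)) = -2\mu(\ell_1,\ell_2,\ell_3)^2 = -2\LE(M)/|\Tor(H_1(M))|$, as required.

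It remains to remove the restriction on the admissible bundle. The calculation above uses the bundle with $w_2(\ad(P))=(1,1,1)$ in the natural basis of $\Hom(H_2(M),\Z_2)$, since the Floer exact triangle requires $c_1(P)$ to pair non-trivially with all three surgery duals. For the other admissible classes $(1,1,0)$, $(1,0,1)$ and $(0,1,1)$, I will use handle slides of one zero-framed component over another, exactly as at the end of Proposition \ref{thm2.1}, to produce a new surgery presentation of $M$ in which $w_2(\ad(P))$ becomes $(1,1,1)$ in the new basis; as both $\LE(M)$ and $|\Tor(H_1(M))|$ are manifold invariants, the right-hand side of the proposition is unchanged.

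The main obstacle I expect is bookkeeping rather than a new idea: I must verify carefully that the Seifert surfaces $F_1,F_2$ and the push-off $c^+$ chosen to compute the Sato--Levine invariant in $\Sigma$ can simultaneously be arranged to avoid $\ell_3$ and to make $F_1\cap F_2$ a clean single ribbon, so that Lemma \ref{Lem:Hos} compares the \emph{same} geometric $c,c^+$ on both sides of the equation. Once this compatibility is pinned down by general position, the identification $\lk_{\Sigma}(c,\ell_3)=\mu(\ell_1,\ell_2,\ell_3)$ and the handle-slide step for bundle independence are essentially dictated by the $b_1=2$ template.
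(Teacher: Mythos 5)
Your proposal is correct and follows essentially the same route as the paper: apply the Floer exact triangle in the $\ell_3$ component, invoke the $b_1=2$ result to express each term via the Sato--Levine invariant, compute the difference via Lemma \ref{Lem:Hos} applied to $(c,c^+)$, identify $\lk_{\Sigma}(c,\ell_3)=c\cdot F_3=\mu(\ell_1,\ell_2,\ell_3)$, and handle bundle independence by the handle-slide change of basis. Your explicit attention to arranging $F_1,F_2,c,c^+$ to avoid $\ell_3$ is a worthwhile point the paper leaves implicit, but it does not change the argument.
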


\begin{proof}
Let $P$ be a $U(2)$-bundle over $M$ such that $w_2(\ad(P))$ evaluates non-trivially on all three homology classes obtained by capping off Seifert surfaces of $\ell_1$, $\ell_2$ and $\ell_3$ by meridional disks of the surgery. By the Floer exact triangle, 
\begin{align*}
\I(M,P)))
	&= \I((\Sigma + 0 \cdot \ell_{1}+0 \cdot \ell_{2})- \ell_{3}))- \I(\Sigma + 0 \cdot \ell_{1} + 0 \cdot \ell_{2}))\\
	&= -2s(\ell_{1} \cup \ell_{2} \subset (\Sigma - \ell_{3})) + 2s(\ell_{1} \cup \ell_{2} \subset \Sigma)\\
	&= - 2 \cdot \lk_{\Sigma - \ell_3}(c,c^{+}) + 2 \cdot \lk_{\Sigma}(c,c^{+}),
\end{align*}
where $c$ is the intersection circle $c = F_1 \cap F_2$ of the Seifert surfaces $F_1$ and $F_2$ chosen to intersect in a circle. We wish to identify the right hand side of this equation with $-2 (\mu(\ell_1,\ell_2,\ell_3))^2$. By (\ref{Eqn}), 
\[
\lk_{\Sigma - \ell_3}(c,c^{+}) - \lk_{\Sigma}(c,c^{+}) = \lk_{\Sigma}(c,\ell_3) \cdot \lk_{\Sigma}(c^+, \ell_3),
\]
therefore, 
\begin{equation*}
\I(M,P))) = - 2 \cdot \lk_{\Sigma}(c,\ell_{3}) \cdot \lk_{\Sigma}(c^{+},\ell_{3})\\ = - 2\cdot \lk(c,\ell_{3})^{2}.
\end{equation*}
Since $\lk_{\Sigma}(c,\ell_{3})$= $c \cdot F_{3}$ = $\mu(\ell_{1},\ell_{2}, \ell_{3})$, the result follows. The independence of admissible bundle follows by the change of basis argument as before.
\end{proof}

\begin{remark}
If $H_1(M)$ has no $2$-torsion, by the same reasoning as in the remark after Proposition \ref{thm2.1}, the statement of Proposition \ref{thm3} holds for any admissible bundle over $M$. 
\end{remark}

It is worth mentioning that Ruberman and Saveliev \cite{RSI} showed that $1/2\cdot \I(M,P)) = \LE(M) \mod{2}$ for all $M$ with $H_{*}(M)  = H_{*}(T^3)$ using techniques different from ours. 

\begin{example}
Given a knot $\ell \subset S^3$, Kronheimer and Mrowka \cite{KMK} defined its reduced singular instanton knot homology $\In (\ell)$ as follows. Take the knot exterior $S^3 \smallsetminus N(\ell)$ and construct a closed $3$-manifold $Y$ by attaching, along the boundary, the manifold $F \times S^2$, where $F$ is a punctured $2$-torus. The attaching is done so that the curve $\partial F \times \{ \pt \} $ matches the canonical longitude of $\ell$, and the curve $\{ \pt \} \times S^1$ matches its meridian. Then 
\begin{equation}\label{eq:KM}
I_{*}(Y,P) = \In (\ell) \oplus \In (\ell)
\end{equation}
for a particular choice of admissible bundle $P$ over $Y$, see \cite[Proposition 5.7]{KMK}. One can easily see that $Y$ is homeomorphic to the manifold obtained by $(0,0,0)$-surgery on $\ell\,\# \Br$, where $\Br$ stands for the Borromean rings. Since the link $\ell\, \# \, \Br$ is algebraically split with $\mu(\ell\, \# \Br) = 1$, it follows from equations \eqref{{eq:LEb3}}, \eqref{eq:chib3} and \eqref{eq:KM} that 
\[
\chi (\In (\ell))\;=\; \frac{1}{2} \cdot \I(Y,P)) = -1. 
\]
\end{example}

\section{Case of $b_1(M) \geq 4$}
For all closed oriented connected $3$-manifolds $M$ with $b_1(M) = n \geq 4$, the Lescop invariant $\LE (M)$ is known to vanish. 

\begin{prop}{\label{thm4}}
Let $M = \Sigma + 0 \cdot \ell_{1} + \ldots  + 0 \cdot \ell_{n}$, where $n \geq 4$, $\Sigma$ is a rational homology sphere, $\ell_1, \cdots, \ell_n \subset \Sigma$ are null-homologous knots such that $\lk_{\Sigma}(\ell_{i}, \ell_{j})$ =0 for $i \neq j$. Then $\I(M,P))=0$ for any admissible bundle $\ad(P)$ over $M$ whose restriction to the exterior of $\ML = \ell_1 \cup \cdots \cup \ell_n$ in $\Sigma$ is trivial. 
\end{prop}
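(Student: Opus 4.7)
The plan is induction on $n = b_1(M) \geq 4$, with the Floer exact triangle applied along $\ell_n$ as the driver. Writing $M_1 = (\Sigma - \ell_n) + 0\cdot\ell_1 + \ldots + 0\cdot\ell_{n-1}$ and $M_2 = \Sigma + 0\cdot\ell_1 + \ldots + 0\cdot\ell_{n-1}$, the Floer exact triangle yields
\begin{equation*}
\I(M,P)) \;=\; \I(M_1)) \;-\; \I(M_2)).
\end{equation*}
Both $M_1$ and $M_2$ have $b_1 = n - 1$, and one verifies that they satisfy the hypotheses of the proposition at this Betti number: Hoste's Lemma~\ref{Lem:Hos} gives $\lk_{\Sigma - \ell_n}(\ell_i,\ell_j) = 0$ for $i,j < n$, and Seifert surfaces of the $\ell_j$ can be chosen disjoint from $\ell_n$ (since the link is algebraically split), so they persist in $\Sigma - \ell_n$ as witnesses to null-homology.

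For the inductive step $n \geq 5$, the inequality $n - 1 \geq 4$ allows Proposition~\ref{thm4} itself to apply to both $M_1$ and $M_2$, so $\I(M_1)) = \I(M_2)) = 0$ and hence $\I(M,P)) = 0$ immediately.

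For the base case $n = 4$, both $M_1$ and $M_2$ have $b_1 = 3$, so Proposition~\ref{thm3} combined with the Lescop formula~\eqref{{eq:LEb3}} gives
\begin{equation*}
\I(M,P)) \;=\; -2\,\mu(\ell_1,\ell_2,\ell_3 \subset \Sigma - \ell_4)^{2} \;+\; 2\,\mu(\ell_1,\ell_2,\ell_3 \subset \Sigma)^{2}.
\end{equation*}
The key step is to show that these two Milnor triple linking numbers agree. Choosing Seifert surfaces $F_1, F_2, F_3$ for $\ell_1, \ell_2, \ell_3$ disjoint from $\ell_4$ (again using algebraic splitness), the triple intersection $F_1 \cap F_2 \cap F_3$ lies in the common submanifold $\Sigma \smallsetminus N(\ell_4)$, so the signed count is identical in $\Sigma$ and in $\Sigma - \ell_4$. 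Hence $\I(M,P)) = 0$, and independence from the choice of admissible bundle follows by the handle-slide change of basis argument used in Propositions~\ref{thm2.1} and~\ref{thm2:2}. The main technical obstacle I foresee is not the conceptual structure above but rather the careful bookkeeping of admissible bundles through the induction, verifying that the bundles the Floer exact triangle produces on $M_1$ and $M_2$ remain trivial on the appropriate link exteriors so that the inductive hypothesis or Proposition~\ref{thm3} applies in the stated form.
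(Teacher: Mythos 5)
Your proposal is correct and follows essentially the same route as the paper: induction on $n$ driven by the Floer exact triangle, with the base case $n=4$ reduced via Proposition~\ref{thm3} to showing that the relevant invariant of $\ell_1\cup\ell_2\cup\ell_3$ is unchanged by $(-1)$-surgery on $\ell_4$. The only (harmless) difference is in that last step: the paper applies Lemma~\ref{Lem:Hos} to conclude $\lk_{\Sigma-\ell_4}(c,\ell_3)=\lk_{\Sigma}(c,\ell_3)$ since $\lk_{\Sigma}(\ell_3,\ell_4)=0$, whereas you choose all three Seifert surfaces disjoint from $\ell_4$ so that the triple-point count persists verbatim --- both arguments are valid and establish the same invariance.
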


\begin{proof}
Let $P$ denote the $U(2)$-bundle over $M$ such that $w_2(\ad(P))$ evaluates non-trivially on the homology classes obtained by capping off Seifert surfaces of the components $\ell_i$, $i = 1,\ldots,n$, by meridional disks of the surgery. By the Floer exact triangle, if $n = 4$,
\begin{multline}\notag
\I(M,P)) = \I((\Sigma + 0 \cdot \ell_{1}+0 \cdot \ell_{2}+ 0 \cdot \ell_{3})- \ell_{4})) \\ \qquad\qquad\qquad
 - \I(\Sigma + 0 \cdot \ell_{1} + 0\cdot \ell_{2}+ 0 \cdot \ell_{3}))\\
 =-2 \cdot \lk_{\Sigma - \ell_4}(c,\ell_{3})^2 + 2 \cdot\lk_{\Sigma}(c,\ell_{3})^2.
\end{multline}
The result now follows because  
\begin{equation*}
\lk_{\Sigma  -\ell_4}(c,\ell_{3})- \lk_{\Sigma}(c,\ell_{3}) =\lk_{\Sigma}(c,\ell_{4})\cdot \lk_{\Sigma}(\ell_{3}, \ell_{4}) = 0,
\end{equation*}
using equation (\ref{Eqn}) and the fact that $\lk(\ell_3, \ell_4) = 0$.

If $n \geq 5$, choose an admissible bundle $\ad(P)$ whose restriction to the exterior of $\ML\subset\Sigma$ is trivial, and proceed by induction. Suppose that $\I(M',P)) = 0$ for all $M'$ with $b_1(M')$ = $k$, where $4 \leq k \leq n-1$. Let $M = \Sigma + 0 \cdot \ell_{1} + \ldots + 0 \cdot \ell_n$, where $\ell_i$ are null-homologous knots in a rational homology sphere $\Sigma$ such that $\lk_{\Sigma}(\ell_{i}, \ell_{j})$ =0 for $i \neq j$. Then by the Floer exact triangle 
\begin{multline}\notag
\I(M,P)) = \I((\Sigma + 0 \cdot \ell_{1}+\ldots + 0 \cdot \ell_{n-1})- \ell_{n}))\\ -  \I(\Sigma + 0 \cdot \ell_{1} 	+ \ldots + 0 \cdot \ell_{n-1})).
\end{multline}
Since 
\begin{equation*}
b_1((\Sigma - \ell_{n})+ 0 \cdot \ell_{1}+\ldots + 0 \cdot \ell_{n-1}) = b_1(\Sigma + 0 \cdot \ell_{1} + \ldots + 0 \cdot \ell_{n-1}) = n-1,
\end{equation*}
we conclude that $\I(M,P)) = 0$.
\end{proof}

\begin{remark}
As before, the result holds for all admissible bundles over $M$ if $H_1(M)$ has no $2$-torsion; the latter restriction can conjecturally be removed. 
\end{remark}


\section{An example}
The following example illustrates the appearance of the factor $|\Tor H_1(M)|$ in the Lescop invariant from a gauge-theoretic viewpoint.

Let $Y$ be a closed oriented 3-manifold with torsion-free first homology of rank at least one, and consider the manifold $M = Y\;\#\;L(p,q)$ obtained by connect summing $Y$ with a lens space $L(p,q)$. It follows from the connected sum formula in Lescop \cite{LE} that 
\[
\lambda_L (M) = p\cdot \lambda_L (Y) = |\Tor H_1 (M)|\cdot \lambda_L (Y)
\]
but we wish to explain the factor $|\Tor H_1 (M)|$ from a gauge-theoretic viewpoint.

Let $P$ be an admissible bundle over $M$ obtained by matching an admissible bundle $Q$ over $Y$ with a trivial bundle over $L(p,q)$. As in \cite[Section 3.2]{RSI}, the holonomy map provides a bijective correspondence between gauge equivalence classes of projectively flat connections in $P$ and conjugacy classes of projective representations
\begin{equation*}
\alpha: \pi_1(Y\;\# \; L(p,q)) \to SU(2)
\end{equation*}
with the Stiefel--Whitney class $w_2(P)$. Since $\pi_1 (Y\,\#\,L(p,q)) = \pi_1(Y) * \pi_1(L(p,q))$ is a free product, all such $\alpha$ will be of the form $\alpha = \beta * \gamma$, where $ \beta: \pi_1(Y) \to SU(2)$ is a projective representation with the Stiefel--Whitney class $w_2(Q)$, and $\gamma: \pi_1(L(p,q)) \to SU(2)$ is a representation. We will assume for the sake of simplicity that the character variety of $\pi_1(Y)$ is non-degenerate; the general case can be handled using perturbations. Note that since $\beta$ is irreducible, each pair of conjugacy classes $[\beta], [\gamma]$ gives rise to a family of $[\alpha]$ parameterized by $SU(2)/\Stab(\gamma)$.

We will next examine the $SU(2)$-character variety of $\pi_1(L(p,q))$. Since $\pi_1 (L(p,q)) = \Z_p$ is abelian, one may assume after conjugation that the image of a representation $\gamma: \pi_1(L(p,q)) \to SU(2)$ is a unit complex number. Fix a generator $1 \in \pi_1 (L(p,q))$, then such representations $\gamma$ correspond to the roots of unity $\gamma(1) = \exp(2\pi in/p)$, with $0 \le n \le p-1$. The number of conjugacy classes of $\gamma$ and the size of $\Stab(\gamma)$ depend on the parity of $p$ as follows.

When $p$ is odd, the trivial representation $\theta$ corresponding to $n = 0$ is the only central representation. Its stabilizer equals $SU(2)$ hence it gives rise to a single point in the character variety of $\pi_1 (M)$ for each $[\beta]$. Other representations $\gamma$ are non-central and, since $\cos (2 \pi n/p) = \cos(2 \pi (p-n)/p)$, there are $(p-1)/2$ conjugacy classes of them enumerated by $\tr(\gamma(1)) = 2 \cos (2 \pi n/p)$, $1 \le n \le (p-1)/2$. Since each of these $\gamma$ has stabilizer $U(1)$, it gives rise to a copy of $SU(2)/U(1) = S^2$ in the character variety of $\pi_1 (M)$ for each $[\beta]$.

When $p$ is even, there are two central representations, $\pm \theta$, each giving rise to two points in the character variety of $\pi_1 (M)$ for each $[\beta]$. Like in the odd case, each of the remaining $(p-2)/2$ conjugacy classes corresponding to $\tr(\gamma(1)) = 2 \cos (2 \pi n/p)$ gives rise to a 2-sphere's worth of representations in the character variety of $\pi_1 (M)$ for each $[\beta]$.

One can easily see that the 2-spheres in the character variety of $\pi_1 (M)$ described above are non-degenerate in the Morse--Bott sense. Therefore, each of them contributes $\pm \chi (S^2) = \pm 2$ to the Euler characteristic of the instanton Floer homology of $M$. The latter follows for instance from \cite[Theorem 5.1]{SH} which compares the Wilson loop perturbations of Floer \cite{F1} to Morse-type perturbations. The signs of these contributions can be figured out by either computing the Floer indices of $\beta*\gamma$ or by using an ad hoc argument equating the Euler characteristic of instanton homology to the Lescop invariant. 

The final outcome is that each $[\beta]$ in the character variety of $\pi_1 (Y)$ contributes $1 + 2\cdot (p-1)/2 = p$ to the Euler characteristic $\I(M,P)$ if $p$ is odd, and it contributes $2 + 2\cdot (p-2)/2 = p$ if $p$ is even. In both cases, this results in the desired formula $\I(M,P)) = p\cdot \I(Y,Q)) = |\Tor H_1(M)| \cdot \I(Y,Q))$.


\end{document}